\theoremstyle{plain}
\newtheorem{thm}{Theorem}[section]
\newtheorem{lem}[thm]{Lemma}
\title{The Automorphism Group of $NU(3,q^2)$.}
\author{Federico Romaniello \footnote{Federico Romaniello:
federico.romaniello@unibas.it
Dipartimento di Matematica, Informatica ed Economia -
Universit\`{a} degli Studi della Basilicata - Viale dell'Ateneo Lucano 10
- 85100 Potenza (Italy),}\hfill\newline\hspace*{1.4em}
Valentino Smaldore \footnote{Valentino Smaldore:
valentino.smaldore@unibas.it %\hfill\newline\hspace*{1.4em}
Dipartimento di Matematica, Informatica ed Economia -
Universit\`{a} degli Studi della Basilicata - Viale dell'Ateneo Lucano 10
- 85100 Potenza (Italy).}}
\date{}
\begin{document}
\maketitle
\begin{abstract}
Let $H(n, q^2)$ be a non-degenerate Hermitian variety of $PG(n,q^2)$, $n \geq 2$. Let $NU(n+1,q^2)$ be the graph whose vertices are the points of $PG(n,q^2) \setminus H(n,q^2)$ and two vertices $u,~v$ are adjacent if the line joining $u$ and $v$ is tangent to $H(n, q^2
)$. Then $NU(n + 1, q^2)$ is a strongly regular graph. In this paper we show that the automorphism group of the graph $NU(3,q^2)$ is isomorphic either to $P\Gamma U(3,q)$, the automorphism group of the projective unitary group  $PGU(3,q)$, or to $S_{3} \wr S_4$, according as $q \neq 2$, or $q=2$.

\end{abstract}

\section{Introduction} A \textit{strongly regular graph} with parameters $(v,k,\lambda,\mu)$ is a graph with $v$ vertices where each vertex is incident with $k$ edges, any two adjacent vertices have $\lambda$ common neighbours, and any two non-adjacent vertices have $\mu$ common neighbours.
 Strongly regular graphs were introduced by R. C. Bose in \cite{Bose} in 1963, and ever since it has intensively been investigated. In particular, the eigenvalues of the adjacency matrix of a strongly regular are known; see \cite{Brouwer}:
 %\begin{thm}
     a strongly regular graph $G$ with parameters $(v,k,\lambda,\mu)$ has exactly three eigenvalues: $k$, $\theta_{1}$ and $\theta_{2}$ of multiplicity, respectively, $1$, $m_{1}$ and $m_{2}$, where:
    $$\theta_{1}=\frac{1}{2}\big[(\lambda-\mu)+\sqrt{(\lambda-\mu)^{2}+4(k-\mu)}\big],$$
    $$\theta_{2}=\frac{1}{2}\big[(\lambda-\mu)-\sqrt{(\lambda-\mu)^{2}+4(k-\mu)}\big],$$
    $$m_{1}=\frac{1}{2}\Big[(v-1)-\frac{2k-(v-1)(\lambda-\mu)}{\sqrt{(\lambda-\mu)^{2}+4(k-\mu)}}\Big],$$
    $$m_{2}=\frac{1}{2}\Big[(v-1)+\frac{2k-(v-1)(\lambda-\mu)}{\sqrt{(\lambda-\mu)^{2}+4(k-\mu)}}\Big].$$
 %   \end{thm}
   The \emph{spectrum} of a strongly regular graph is the triple $(k,\theta_{1}^{m_{1}},\theta_{2}^{m_{2}})$. Therefore, two strongly regular graphs with the same parameters are \textit{cospectral}, that is, they have the same spectrum. Two isomorphic graphs are always cospectral, but the converse is not always true.

   Several strongly regular graphs derive from finite polar spaces, in particular from collinearity graphs or incidence graphs. %In this paper we investigate a family of strongly regular graphs arising from a non-degenerate polar space $\mathcal{P}$ embedded in a projective space.
   The \textit{tangent graph} of a polar space $\mathcal{P}$ embedded in a projective space is defined to be the graph in which the vertices are the non-isotropic points with respect to the polarity defining $\mathcal{P}$, and two vertices are adjacent if they lies on a line tangent to $\mathcal{P}$. In \cite{Brouwer} it is proved that tangent graphs are strongly regular.

   The tangent graph $NU(n+1,q^2)$ of the classical Hermitian polar space associated to a non-degenerate Hermitian variety in $PG(n,q^{2})$ was studied by Ihringer, Pavese and Smaldore \cite{Pavese} who pointed out that $NU(n+1,q^2)$ is not uniquely determined by its spectrum for $n\neq3 $. In particular, for $n=2$, they constructed a strongly regular graph that is cospectral but non isomorphic to $NU(n+1,q^2)$. The construction relied on the existence of non-classical unitals embedded in $PG(2,q^2)$ for $q>2$. The goal in this paper is to determine the automorphism group of $NU(3,q^{2})$.
   \begin{thm}
  \label{main}	
     Let $G_{2}=Aut(NU(3,q^{2}))$ be the automorphism group of the graph $NU(3,q^{2})$:
     \begin{enumerate}
      \item if $q\neq2$, $G_{2}\cong P\Gamma U(3,q)$, the semilinear collineation group stabilizing the Hermitian variety $H(2,q^{2})$;
           \item if $q=2$, $G_{2}\cong S_{3} \wr S_4 \cong S_3^4 \rtimes S_4$.
      \end{enumerate}
  \end{thm}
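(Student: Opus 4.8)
The plan is to prove the two inclusions separately and to isolate $q=2$ as a genuine exception. For the easy direction, I would first verify that $P\Gamma U(3,q)$, viewed as the stabiliser of $H(2,q^2)$ in the full group of semilinear collineations of $PG(2,q^2)$, acts as automorphisms of $NU(3,q^2)$: such a collineation permutes the non-isotropic points and carries tangent lines to tangent lines, hence preserves adjacency. This action is faithful because the non-isotropic points contain a frame, so $P\Gamma U(3,q)$ embeds into $G_2=\mathrm{Aut}(NU(3,q^2))$. The whole content of the theorem is the reverse inclusion for $q\neq2$, together with the breakdown of that argument at $q=2$.

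For $q=2$ I would argue directly. Here $NU(3,4)$ has $v=q^2(q^2-q+1)=12$ vertices and is $k=(q+1)(q^2-1)=9$-regular, so its complement is $2$-regular, with $u,v$ non-adjacent exactly when the line $uv$ is secant. Since a secant carries $q^2-q=2$ non-isotropic points when $q=2$, the polar $\ell_u$ of a non-isotropic point $u$ meets the non-isotropic points in just two further points, which together with $u$ form a self-polar triangle; hence the $12$ vertices partition into $4$ self-polar triangles and the complement is $4K_3$. Thus $NU(3,4)\cong K_{3,3,3,3}$, whose automorphisms permute the four triples and act inside each, giving $G_2\cong S_3\wr S_4\cong S_3^4\rtimes S_4$. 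This visibly exceeds $P\Gamma U(3,2)$, and the reason is structural: $K_{3,3,3,3}$ has $3^4=81$ maximum cliques of size $q^2=4$, far more than the $q^3+1=9$ tangent lines, so tangent lines cannot be singled out combinatorially.

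For $q\ge3$ the heart of the proof is a clique lemma: the maximum cliques of $NU(3,q^2)$ are precisely the $q^2$-sets of non-isotropic points on a tangent line. The bound $q^2$ should follow from the Hoffman--Delsarte ratio bound, the least eigenvalue being $-(q+1)$; I would then show geometrically that a clique attaining it must be collinear, the alternative being a clique spread over several tangents through one of its points, whose vertices are forced to be the poles of the sides of a triangle inscribed in $H$ and which stays well below $q^2$ once $q\ge3$. Granting the lemma, any $\phi\in G_2$ permutes maximum cliques, hence tangent lines, and therefore induces a permutation $\bar\phi$ of the $q^3+1$ points of $H$ through the bijection between a tangent line and its point of tangency.

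It then remains to recognise $\bar\phi$ as coming from a collineation. For a vertex $u$ the $q+1$ maximum cliques through $u$ are the tangents through $u$, whose tangency points are exactly $\ell_u\cap H$, where $\ell_u$ is the polar (a secant) of $u$; so each $u$ determines a block $B_u=\ell_u\cap H$ of the classical Hermitian unital $\mathcal U$, and $u\mapsto B_u$ is a bijection from vertices to blocks. Because $\phi$ matches the cliques through $u$ with those through $\phi(u)$, one obtains $\bar\phi(B_u)=B_{\phi(u)}$, so $\bar\phi$ sends blocks to blocks and lies in $\mathrm{Aut}(\mathcal U)$. The map $\phi\mapsto\bar\phi$ is injective, for if $\bar\phi=\mathrm{id}$ then every tangent line is fixed setwise, and since $u$ is the unique vertex common to two tangents through it, $\phi$ fixes every vertex. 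Using the known equality $\mathrm{Aut}(\mathcal U)\cong P\Gamma U(3,q)$ for $q\ge3$ and the faithful embedding of the first step --- whose composite with $\phi\mapsto\bar\phi$ is exactly the restriction isomorphism --- a short diagram chase forces $G_2\cong P\Gamma U(3,q)$. I expect the clique lemma to be the main obstacle: one must exclude every non-collinear maximum clique for all $q\ge3$, which is exactly the statement that fails at $q=2$, and so is where the dichotomy of the theorem is decided.
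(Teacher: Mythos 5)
Your proposal is correct in substance, but for $q>2$ it follows a genuinely different route from the paper's. The shared ingredients are the embedding $P\Gamma U(3,q)\leq G_2$, the $q=2$ computation (both arguments observe that the complement of $NU(3,4)$ is $4K_3$, whence $G_2\cong S_3\wr S_4$), and the clique classification: for $q\neq 2$ every maximum clique is the set of $q^2$ non-isotropic points of a tangent line. For that classification the paper simply cites Bruen--Fisher (maximal cliques consist of $q^2$ collinear points or of $q+2$ points on two lines, and these sizes coincide exactly when $q=2$), whereas you propose Hoffman's ratio bound --- your computation is right, the least eigenvalue is $-(q+1)$, giving $\omega\leq 1+k/(q+1)=q^2$ --- plus a geometric exclusion of non-collinear $q^2$-cliques, which you leave as the ``main obstacle'' and whose sketch (poles of the sides of an inscribed triangle) does not accurately describe the extremal non-collinear cliques; that step is best closed by citing Bruen--Fisher exactly as the paper does, after which it is not a gap. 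From the clique lemma onwards the two proofs part ways. The paper adjoins the tangency points to form an auxiliary graph $\Gamma_2$, proves $Aut(\Gamma_2)\cong G_2$, deduces that $G_2$ acts $2$- but not $3$-transitively on tangents, and then works through Cameron's (CFSG-dependent) table of socles of $2$-transitive groups, eliminating every candidate except $PSU(3,q)$, so that $G_2\leq Aut(PSU(3,q))\cong P\Gamma U(3,q)$. You instead transport $G_2$ into the automorphism group of the classical unital via the pole--polar dictionary $u\mapsto B_u=\ell_u\cap H$ (your verifications that $\bar\phi$ carries blocks to blocks and that $\phi\mapsto\bar\phi$ is injective are sound), and conclude from O'Nan's theorem that $Aut(\mathcal U)\cong P\Gamma U(3,q)$ for $q>2$; the two opposite embeddings between finite groups then force $G_2\cong P\Gamma U(3,q)$. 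The trade-off: your route avoids the classification of finite $2$-transitive groups entirely, replacing it by O'Nan's 1972 (pre-CFSG) theorem on unital automorphisms, and it makes the $q=2$ breakdown transparent ($NU(3,4)\cong K_{3,3,3,3}$ has $3^4=81$ maximum $4$-cliques, of which only $q^3+1=9$ come from tangents); the paper's route needs no unital theory but pays with the case-by-case elimination over Cameron's list.
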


   \section{Preliminary results}
 \subsection{Hermitian varieties and classical unitals}
  In design theory, an \textit{unital} $\mathcal{U}$ is defined as a $2-(a^{3}+1,a+1,1)$ block design, $a\geq3$, i.e. a set of $a^{3}+1$ points arranged into blocks of size $a+1$, such that each pair of distinct points is contained in exactly one block. There exist unitals which are embedded in a projective plane of order $a^{2}$. The classical example consists of the points of the Hermitian curve $H(2,q^{2})$ in $PG(2,q^{2})$.

  Our notation is standard, see \cite{Hirschfeld1}. In particular, $PG(n, q^{2})$ stands for the $n$-dimensional projective space over the finite field $F_{q^{2}}$, with homogeneous projective coordinates $(X_{0}, X_{1}, \dots, X_{n})$. A unitary polarity of $PG(n, q^{2})$ is induced by a non-degenerate Hermitian form on the vector space $V(n+1,q^{2})$. A non-degenerate Hermitian variety $H(n, q^{2})$ consists of the isotropic points of a unitary polarity. A non-degenerate Hermitian variety $H(n, q^{2})$ has $\frac{(q^{n+1}+(-1)^n)(q^n-(-1)^n)}{q^2-1}$ points. For $n=2$, a  canonical form of the Hermitian curve $H(2,q^{2})$ is
  $$X_0^{q+1}+X_1^{q+1}+X_2^{q+1}=0,$$
  and the semilinear group stabilizing $H(2,q^{2})$ is isomorphic to the projective unitary group $P\Gamma U(3,q)$, acting on
  the points of the curve as 2-transitive permutation group, as it was shown in \cite{Hirschfeld1}.

  The Hermitian curve $H(2,q^{2})$ has $q^{3}+1$ points and does not contain isotropic lines. Lines of $PG(2,q^{2})$ are called either \textit{tangent} or \textit{secant} when they meet the curve in 1 or $q+1$ points respectively. In design theory terminology, blocks of the classical unital are the sets of $q+1$ collinear points in the unital. The \textit{dual unital} is defined on the dual projective plane $[PG(2,q^{2})]^*$, where the $q^{3}+1$ tangents are considered as points, and the blocks are the pencil of tangent lines from an external point. The automorphism group of the Hermitian unital acts on the points (as well a on the tangents) as $P\Gamma U(3,q)$ in its unique 2-transitive permutation representation.

 \subsection{Background on the graph $NU(n+1,q^{2})$, $n\geq2$}
  In this paper $H=H(n,q^{2})$ denotes a non-degenerate Hermitian variety in $PG(n,q^{2})$ with $n\geq2$. Then $NU(n+1,q^{2})$ is the strongly regular with vertex set $PG(n,q^{2})\setminus H$ where two vertices are adjacent if they lie on the same tangent line to $H$. Set $\varepsilon=(-1)^{n+1}$ and $r=q^{2}-q-1$. Then $NU(n+1,q^{2})$ has the following parameters:
  $v=\frac{q^{n}(q^{n+1}-\varepsilon)}{q+1}$\\
  $k=(q^{n}+\varepsilon)(q^{n-1}-\varepsilon)$\\
  $\lambda=q^{2n-3}(q+1)-\varepsilon q^{n-1}(q-1)-2$\\
  $\mu=q^{n-2}(q+1)(q^{n-1}-\varepsilon)$.\\
  Furthermore, its complementary graph $\overline{NU(n+1,q^{2})}$ is also strongly regular and has parameters:\\
  $v'=\frac{q^{n}(q^{n+1}-\varepsilon)}{q+1}$\\
  $k'=\frac{q^{n-1}r(q^{n}+\varepsilon)}{q+1}$\\
  $\lambda'=\mu'+\varepsilon q^{n-2}r-\varepsilon q^{n-1}$\\
  $\mu'=\frac{q^{n-1}r(q^{n-2}r+\varepsilon)}{q+1}$.\\
  For further details and information about this graph see \cite{brvan}.
  The construction shows the following lemma, since the group $P\Gamma U(n+1,q^2)$ is transitive on the external point of $H(n,q^2)$ and it preserves the adjacency properties of the graph.
  %Regarding the automorphism group of we first observe that for each $n$, the semilinear group $P\Gamma U(n+1,q^{2})$ is contained in $Aut(NU(n+1,q^{2}))$.
  \begin{lem}
   \label{subgr}
   The subgroup $P\Gamma U(n+1,q)$ of $P\Gamma L(n+1,q^2)$ is an automorphism group of $NU(n+1,q^{2})$.
   %Let $G_{n}=Aut(NU(n+1,q^{2}))$ be the automorphism group of the graph $NU(n+1,q^{2})$, $n\geq2$, then $P\Gamma U(n+1,q^{2})\subseteq G_{n}$.
  \end{lem}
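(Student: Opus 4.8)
The plan is to read the statement off directly from the definition of $P\Gamma U(n+1,q)$ as the stabiliser of $H=H(n,q^2)$ inside $P\Gamma L(n+1,q^2)$, checking that this stabiliser preserves the vertex set and the adjacency relation of $NU(n+1,q^2)$.

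First I would note that every $g\in P\Gamma U(n+1,q)$ is a semilinear collineation of $PG(n,q^2)$: it induces a bijection on the points, it sends lines to lines, and it preserves incidence. Since $g(H)=H$ by the very definition of $P\Gamma U(n+1,q)$, the map $g$ also fixes the complement $PG(n,q^2)\setminus H$ setwise, hence it restricts to a permutation of the vertex set of $NU(n+1,q^2)$.

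Next I would check that $g$ preserves tangency. A line $\ell$ of $PG(n,q^2)$ is tangent to $H$ exactly when $|\ell\cap H|=1$; since $g$ maps lines to lines and $g(H)=H$, one has $g(\ell)\cap H=g(\ell\cap H)$, so $|g(\ell)\cap H|=|\ell\cap H|$. Thus $g$ sends tangent lines to tangent lines (and secants to secants). Consequently, if two vertices $u,v$ lie on a common tangent line $\ell$, then $g(u),g(v)$ lie on the tangent line $g(\ell)$ and are therefore adjacent; applying the same argument to $g^{-1}$ yields the converse, so $g$ is an automorphism of the graph. Finally, since the action of $P\Gamma L(n+1,q^2)$ on the points of $PG(n,q^2)$ is faithful, so is the induced action of $P\Gamma U(n+1,q)$ on the vertices, which gives an embedding $P\Gamma U(n+1,q)\hookrightarrow Aut(NU(n+1,q^2))$.

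There is no genuine obstacle here: the lemma is essentially a bookkeeping statement, and the only point worth isolating is the elementary observation that a collineation fixing $H$ cannot change the intersection number of a line with $H$, which is precisely what makes adjacency in $NU(n+1,q^2)$ an invariant of the group. (Transitivity of $P\Gamma U(n+1,q)$ on the external points, mentioned above, is a convenient extra fact but is not needed for the inclusion itself.)
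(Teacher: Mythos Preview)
Your argument is correct and follows the same line the paper indicates (in the sentence preceding the lemma rather than in a formal proof): $P\Gamma U(n+1,q)$ stabilises $H(n,q^2)$, hence permutes the external points and the tangent lines, so it preserves adjacency in $NU(n+1,q^2)$. Your write-up simply makes explicit the routine verifications the paper leaves implicit, and your closing remark that transitivity on external points is not actually needed for the inclusion is accurate.
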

  %\begin{proof}
  % The result is immediate, because the group $P\Gamma U(n+1,q^{2})$ is transitive on the external points to $H(n,q^{2})$, i.e. the veritces of $NU(n+1,q^{2})$, and it preserves the lines, i.e. the adjacency properties on the graph.
  %\end{proof}

  In the planar case, the graph $NU(3,q^{2})$ arises from a Hermitian curve $H(2,q^{2})$. This curve has $q^{3}+1$ isotropic points, so that the vertex set of $NU(3,q^{2})$ has size  $q^{4}+q^{2}+1-(q^{3}+1)=q^{4}-q^{3}+q^{2}$, while the other parameters are $k=(q^2-1)(9q+1)$, $\lambda=2(q^2-1)$ and $\mu=(q+1)^2$ .

  \begin{lem}
   \label{curve}
   If $q\neq2$, $Aut(NU(3,q^{2}))$ sends $q^{2}$ vertices on a tangent line in other $q^{2}$ vertices on a tangent line.
  \end{lem}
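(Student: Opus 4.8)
The plan is to recognise the $q^2$ vertices lying on a tangent line as a \emph{clique} of $NU(3,q^2)$ and then to characterise these cliques in a purely graph-theoretic way, so that any automorphism is forced to permute them. First I would record the easy direction: the $q^2$ external points on a tangent line $\ell$ are pairwise adjacent, since the line joining any two of them is $\ell$ itself, which is tangent to $H$. Hence every tangent line gives a clique of size $q^2$, and it is exactly this family of cliques that must be shown to be preserved.

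Next I would bound the clique number through the spectrum, which is the natural tool given the parameters $k=(q+1)(q^2-1)$, $\lambda=2(q^2-1)$, $\mu=(q+1)^2$. One computes $\lambda-\mu=(q-3)(q+1)$ and $k-\mu=(q-2)(q+1)^2$, so $(\lambda-\mu)^2+4(k-\mu)=(q^2-1)^2$ and therefore the least eigenvalue is $\theta_2=-(q+1)$. Hoffman's ratio bound then gives $\omega\le 1-\tfrac{k}{\theta_2}=q^2$, so the tangent-line cliques are in fact \emph{maximum} cliques. Moreover, since the bound is attained, the partition $\{C,\,V\setminus C\}$ induced by any maximum clique $C$ is equitable, and a double count of the edges leaving $C$ shows that every vertex outside $C$ has exactly $q+1$ neighbours in $C$. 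From this regularity I would extract the key intersection property: \emph{every line of $PG(2,q^2)$ meets a maximum clique in at most $q+1$ points}. Indeed, a secant carries at most one clique vertex (two would be non-adjacent), while if a tangent line $\ell$ carried $t\ge2$ clique vertices without containing $C$, then any external point of $\ell$ outside $C$ would already be adjacent to all $t$ of them, forcing $t\le q+1$.

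The crucial, and hardest, step is to upgrade this to the statement that, for $q\neq2$, every maximum clique is the set of external points of a single tangent line. Here I would argue by contradiction, assuming a maximum clique $C$ is not contained in a line, and exploit the Hermitian structure via the reformulation that two external points $u,v$ are adjacent precisely when the pole $(uv)^{\perp}$ lies on $H$. The aim is to combine this polarity condition with the bound of at most $q+1$ clique points per line and with the regularity of $C$ to over-determine the configuration and reach a contradiction. I expect this geometric rigidity to be the main obstacle, and it is genuinely necessary that it use features special to $q\ge3$: the parameter-level information alone cannot suffice, because for $q=2$ the conclusion is \emph{false}. There $NU(3,4)$ is the complete multipartite graph $K_{3,3,3,3}$, whose $4$-cliques are its $3^4$ transversals, almost none of which arise from tangent lines; this is exactly why the lemma excludes $q=2$ and why the exceptional automorphism group $S_3\wr S_4$ appears in that case.

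Finally, once the maximum cliques have been identified with the tangent-line vertex-sets, the lemma follows immediately: an automorphism of $NU(3,q^2)$ maps cliques to cliques preserving size, hence permutes the maximum cliques, and therefore sends the $q^2$ vertices on a tangent line onto the $q^2$ vertices of another tangent line.
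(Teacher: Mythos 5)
Your overall strategy coincides with the paper's: view the $q^2$ external points of a tangent line as a clique, show that these cliques are graph-theoretically distinguished among all cliques, and conclude because any automorphism permutes cliques of a given type. Your supporting computations are correct and even add something the paper leaves implicit: with $\lambda-\mu=(q-3)(q+1)$ and $k-\mu=(q-2)(q+1)^2$ one indeed gets $\theta_2=-(q+1)$, so the ratio bound gives $\omega\le 1-k/\theta_2=q^2$ and the tangent-line cliques are \emph{maximum} cliques; the equitable-partition consequence (every outside vertex has exactly $q+1$ neighbours in such a clique) and your remark that the lemma genuinely fails for $q=2$, where $NU(3,4)\cong K_{3,3,3,3}$ has $3^4$ transversal $4$-cliques of which only $9$ come from tangents, are also correct.

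However, there is a genuine gap at precisely the step that carries the content of the lemma: the claim that for $q\neq 2$ \emph{every} clique of size $q^2$ consists of the external points of a single tangent line. Without this, an automorphism could a priori send a tangent-line clique to some other $q^2$-clique not arising from a line, and the lemma would not follow. You yourself label this ``the crucial, and hardest, step'', but what you give is only a plan (``argue by contradiction\dots combine this polarity condition with the bound\dots and reach a contradiction''); no contradiction is actually derived, and the combinatorial facts you do establish cannot close it on their own: distributing the $q^2-1$ remaining clique points over the $q+1$ tangents through a fixed clique vertex, with at most $q$ further points on each, is numerically consistent, so the line-intersection bound and the equitable partition by themselves do not rule out a non-collinear $q^2$-clique. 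The paper does not prove this step from scratch either; it invokes the classification of Bruen and Fisher (the reference \cite{Bruen} in the bibliography), by which the maximal cliques of $NU(3,q^2)$ are either the $q^2$ points on a tangent line or exceptional sets of $q+2$ points distributed over two lines, whence for $q\neq 2$ (so $q+2\neq q^2$) every $q^2$-clique is a tangent-line clique. The very existence of those exceptional $(q+2)$-point maximal cliques shows that identifying the cliques of size $q^2$ requires genuine Hermitian-geometric input beyond parameter and eigenvalue considerations. To repair your proof, either carry out that classification in detail or cite Bruen--Fisher for it, as the paper does.
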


  \begin{proof}
   The $q^{2}$ non-isotropic points on a tangent line, form a maximal clique on the graph $NU(3,q^{2})$. From \cite{Bruen} we know that the maximal cliques of $NU(3,q^{2})$ belong to the $q^{2}$ points on a tangent line, or to $q+2$ points on two different line. Moreover, the automorphism group of a graph sends maximal cliques in maximal cliques. If $q\neq2$, i.e. $q^{2}\neq q+2$, the image of a set of $q^{2}$ non-isotropic aligned points, consists on other $q^{2}$ non-isotropic aligned points.
  \end{proof}

 \section{The action of $G_{n}$ on isotropic points}

  To understand how the automorphism group of $NU(n+1,q^{2})$ can act on the isotropic points, it is useful to introduce the graph $\Gamma_{n}$, whose vertex set consists of all points in $PG(n,q^{2})$, and with the same vertex-edge  incidence relation of $NU(n+1,q^{2})$, namely two vertices are adjacent if the points are on the same tangent. $\Gamma_{n}$ has $\frac{q^{2n}-1}{q^{2}-1}$ vertices, and its automorphism group has two orbits on vertices, one is the orbit $\mathcal{O}_{1}$ consisting of the $v=\frac{q^{n}(q^{n+1}-\varepsilon)}{q+1}$ vertices of $NU(n+1,q^{2})$, each vertex of this type having $k=(q^{n}+\varepsilon)(q^{n-1}-\varepsilon)$ neighbours in $NU(n+1,q^{2})$ and others in $H(n,q^{2})$, and the orbit $\mathcal{O}_{2}$ comprises the points of the Hermitian variety $H(n,q^{2})$, each vertex of this type having neighbours in $NU(n+1,q^{2})$ but no neighbour in $H(n,q^{2})$. In other words, we added an extra point, the $(q^{2}+1)$-th, to each of the maximal cliques arising from tangent lines. However, it should be observed that the resulting graph is not strongly regular anymore.
\begin{lem}
   \label{curve2}
   If $q\neq2$, $Aut(\Gamma_{2})\cong G_{2}$.
\end{lem}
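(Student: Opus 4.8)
The plan is to prove that restriction to the vertex set $\mathcal{O}_1$ of $NU(3,q^2)$ defines a group isomorphism $\rho\colon\mathrm{Aut}(\Gamma_2)\to G_2$, $\rho(\phi)=\phi|_{\mathcal{O}_1}$. First I would record two structural facts. By construction the subgraph of $\Gamma_2$ induced on $\mathcal{O}_1$ is $NU(3,q^2)$, and $\mathcal{O}_2$ is an independent set of $\Gamma_2$, since two distinct isotropic points are never joined by a tangent line (a tangent meets $H(2,q^2)$ in exactly one point). Moreover a non-isotropic vertex has strictly more than $q^2$ neighbours in $\Gamma_2$: already its $NU(3,q^2)$-degree $k$ exceeds $q^2$ (indeed $k\ge\lambda+1=2q^2-1$), and to these one adds the $q+1$ isotropic points lying on its polar line; whereas an isotropic vertex has exactly $q^2$ neighbours, namely the $q^2$ non-isotropic points of its tangent line. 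Hence every $\phi\in\mathrm{Aut}(\Gamma_2)$ fixes $\mathcal{O}_1$, and therefore $\mathcal{O}_2$, setwise — this is also the orbit statement already recorded for $\mathrm{Aut}(\Gamma_2)$ — so $\phi|_{\mathcal{O}_1}\in\mathrm{Aut}(NU(3,q^2))=G_2$ and $\rho$ is a well-defined group homomorphism.

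The key step is to recover each isotropic point from the combinatorics of $NU(3,q^2)$. For $P\in H(2,q^2)$ let $\ell_P=P^{\perp}$ be the tangent at $P$ and $C_P$ the set of its $q^2$ non-isotropic points; then $N_{\Gamma_2}(P)=C_P$, and I claim $P$ is the \emph{unique} vertex of $\Gamma_2$ adjacent to every point of $C_P$. An isotropic point adjacent to all of $C_P$ must have tangent line $\ell_P$ and so must be $P$. A non-isotropic point $Q$ can be adjacent to at most $q+1$ points of $C_P$: if $Q\in\ell_P$ then $Q\in C_P$ and $Q$ is not adjacent to itself, while if $Q\notin\ell_P$ then distinct points of $C_P$ are joined to $Q$ by distinct lines, each of which would have to be a tangent through $Q$, but there are only $q+1$ tangents through a point off $H(2,q^2)$ and $q+1<q^2$. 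Thus $P\mapsto C_P$ is a bijection between $\mathcal{O}_2$ and the family of tangent-line cliques of $NU(3,q^2)$, and $C_P$ determines $P$. Injectivity of $\rho$ is now immediate: if $\phi$ fixes $\mathcal{O}_1$ pointwise, then for each $P\in\mathcal{O}_2$ the vertex $\phi(P)\in\mathcal{O}_2$ has the same $\Gamma_2$-neighbourhood $C_P$ as $P$, whence $\phi(P)=P$ and $\phi=\mathrm{id}$.

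For surjectivity, take $\psi\in G_2$. By Lemma~\ref{curve} — this is precisely where $q\neq2$, equivalently $q^2\neq q+2$, enters, so that the tangent-line cliques are singled out among the maximal cliques of $NU(3,q^2)$ classified in \cite{Bruen} by having size $q^2$ — the map $\psi$ permutes the family $\{C_P:P\in\mathcal{O}_2\}$. Transporting this permutation through the bijection $P\mapsto C_P$ yields a permutation $\bar\psi$ of $\mathcal{O}_2$ with $\psi(C_P)=C_{\bar\psi(P)}$ for every $P$. Define $\tilde\psi$ on $V(\Gamma_2)$ to agree with $\psi$ on $\mathcal{O}_1$ and with $\bar\psi$ on $\mathcal{O}_2$. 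Then $\tilde\psi$ is a bijection; it preserves adjacency inside $\mathcal{O}_1$ because $\psi\in\mathrm{Aut}(NU(3,q^2))$, it preserves the empty adjacency inside $\mathcal{O}_2$, and for $R\in\mathcal{O}_1$, $P\in\mathcal{O}_2$ one has $R\sim P\iff R\in C_P\iff\psi(R)\in\psi(C_P)=C_{\bar\psi(P)}\iff\tilde\psi(R)\sim\tilde\psi(P)$. Hence $\tilde\psi\in\mathrm{Aut}(\Gamma_2)$ and $\rho(\tilde\psi)=\psi$, so $\rho$ is onto, and therefore an isomorphism $\mathrm{Aut}(\Gamma_2)\cong G_2$.

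The only genuine obstacle is the reconstruction claim of the second paragraph, and even it rests on just two elementary incidence facts for the Hermitian curve — a tangent meets it in a single point, and a point off the curve lies on exactly $q+1$ tangents — together with Lemma~\ref{curve} and the clique classification of \cite{Bruen}; everything else is the routine verification that $\rho$ is a well-defined, injective and surjective homomorphism.
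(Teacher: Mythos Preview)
Your proof is correct and follows essentially the same approach as the paper: you define the restriction map $\rho\colon\mathrm{Aut}(\Gamma_2)\to G_2$, show injectivity by arguing that an isotropic point is determined by its $q^2$ non-isotropic neighbours on the tangent, and show surjectivity by using Lemma~\ref{curve} to extend an automorphism of $NU(3,q^2)$ to the isotropic points via the permutation induced on the tangent-line cliques. The paper's proof is the same in outline; yours is simply more explicit, in particular supplying the degree argument that $\mathcal{O}_1$ and $\mathcal{O}_2$ are invariant under $\mathrm{Aut}(\Gamma_2)$ (which the paper asserts without proof just before the lemma) and spelling out the uniqueness of $P$ given $C_P$.
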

  \begin{proof}
   From Lemma \ref{curve}, maximal cliques corresponding to a tangent are fixed setwise under the action of $G_2$.
   The projection
   $$\pi:\,
   \begin{cases}
   Aut(\Gamma_{2})\rightarrow G_{2}\\
   \lambda\mapsto\overline	{\lambda}=\lambda\big|_{NU(3,q^{2})}.
   \end{cases}
   $$
   is surjective since every automorphism $\overline{\lambda}$ in $G_2$ belongs to $\lambda \in Aut(\Gamma_2)$ such that the action on $NU(3,q^2)$ is described by $\overline{\lambda}$ and the images of the $(q^2+1)$-th tangency isotropic points must be adjacent in $\Gamma_2$ to all the images of their $q^2$ neighbours in $NU(3,q^2)$.

   To prove the injectivity of the projection $\pi$, consider $\lambda\in Aut(\Gamma_{2})$ such that $\pi(\lambda)$ is the identity on $NU(3,q^{2})$.
   When all the points of $NU(3,q^{2})$ are fixed, the tangency point is fixed as well, since it is adjacent in $\Gamma_2$ to each of the $q^{2}$ non-isotropic points on a tangent.

%   If we fix all the points of $NU(n+1,q^{2})$ we must fix the tangency point, that must be the neighbour of all the $q^{2}$ non-isotropic points on a tangent, to preserve the incidences between vertices of $\Gamma_{n}$.
  Therefore $\lambda$ is the identity on $\Gamma_{2}$, thus the kernel of the projection $\pi$ is trivial, and $\pi$ is a monomorphism.
  \end{proof}

  \begin{lem}
   The automorphism group $G_{2}$ of $NU(3,q^{2})$, $q\neq2$, acts 2-transitively on tangents lines.
  \end{lem}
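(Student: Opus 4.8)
The strategy is to reduce the claim to the classical $2$-transitivity of $P\Gamma U(3,q)$ on the isotropic points, exploiting that $P\Gamma U(3,q)$ is already known to sit inside $G_2$ by Lemma~\ref{subgr}. Write $\mathcal{T}$ for the set of the $q^3+1$ tangent lines of $H(2,q^2)$. I first need that $G_2$ genuinely acts on $\mathcal{T}$: by Lemma~\ref{curve}, since $q\neq 2$, every $g\in G_2$ sends the $q^2$ non-isotropic points on a tangent line to the $q^2$ non-isotropic points on some tangent line, and since a tangent line is determined by those $q^2$ collinear points this defines a permutation $\rho(g)$ of $\mathcal{T}$; the assignment $g\mapsto\rho(g)$ is then a homomorphism $G_2\to\mathrm{Sym}(\mathcal{T})$. (Alternatively one can invoke Lemma~\ref{curve2} and note that the tangent lines are precisely the maximum cliques of $\Gamma_2$.)

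Next I would transport $2$-transitivity across the polarity. Let $\beta\colon H(2,q^2)\to\mathcal{T}$ send an isotropic point $P$ to its polar line $P^{\perp}$, the unique tangent at $P$; since $|H(2,q^2)|=|\mathcal{T}|=q^3+1$ and distinct isotropic points have distinct tangents, $\beta$ is a bijection. For $g\in P\Gamma U(3,q)$ one has $g(P^{\perp})=(g(P))^{\perp}$ because $g$ preserves the unitary polarity, and moreover the permutation $\rho(g)$ of $\mathcal{T}$ coincides with $\ell\mapsto g(\ell)$, because $g$ carries the non-isotropic points of $\ell$ onto those of $g(\ell)$. Hence $\beta$ is $P\Gamma U(3,q)$-equivariant, and since $P\Gamma U(3,q)$ acts $2$-transitively on the points of $H(2,q^2)$ (recalled in Section~2 following \cite{Hirschfeld1}), it acts $2$-transitively on $\mathcal{T}$ as well.

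Finally, because $P\Gamma U(3,q)\le G_2$ and $2$-transitivity of a subgroup forces $2$-transitivity of the overgroup on the same set, $G_2$ acts $2$-transitively on $\mathcal{T}$, as required. I do not anticipate a substantive obstacle here: the only delicate point is the bookkeeping that the abstract action $\rho$ of $G_2$ on $\mathcal{T}$, when restricted to the geometric subgroup $P\Gamma U(3,q)$, really is the natural geometric action on tangent lines, so that the $2$-transitivity genuinely transfers — and this is precisely where Lemma~\ref{curve}, hence the hypothesis $q\neq 2$, enters.
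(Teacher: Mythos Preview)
Your argument is correct and follows essentially the same route as the paper: use Lemma~\ref{subgr} to place $P\Gamma U(3,q)$ inside $G_2$, invoke Lemma~\ref{curve} (equivalently Lemma~\ref{curve2}) to see that $G_2$ genuinely permutes tangent lines, and then transport the known $2$-transitivity of $P\Gamma U(3,q)$ on isotropic points across the bijection $P\mapsto P^{\perp}$ to tangent lines. You spell out the equivariance of this bijection and the subgroup-to-overgroup step more explicitly than the paper does, but the underlying proof is the same.
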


  \begin{proof}
   Since the action of $P\Gamma U(3,q)$ is $2$-transitive on the set of isotropic points of the Hermitian curve, the assertion follows from Lemma \ref{subgr} and Lemma \ref{curve2} taking into account the fact that every isotropic point is the tangency point of a unique tangent to $H(2,q^{2})$.
   %The action of $G_{2}$ is $2$-transitive on the isotropic points of the Hermitian curve , since from Lemma \ it contains $P\Gamma U(3,q^{2}$), and from Lemma \ref{curve2}, the action of $G_{2}$ on tangents, follows its action on tangency points.
  \end{proof}

  \begin{lem}
  \label{no3tr}
   $G_{2}$ does not act $3$-transitively on the set of tangents of $NU(3,q^2)$.	
  \end{lem}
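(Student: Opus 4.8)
The plan is to show that each element of $G_{2}$, acting on the $q^{3}+1$ isotropic points of $H=H(2,q^{2})$ (equivalently, on the tangents), is forced to be an automorphism of the Hermitian unital $\mathcal U=H(2,q^{2})$ regarded as a $2\text{-}(q^{3}+1,q+1,1)$ design. Once this is established, $3$-transitivity is impossible: a $3$-transitive group could map a collinear triple of points of $\mathcal U$ to a non-collinear one, whereas a design automorphism must send blocks to blocks, hence collinear triples to collinear triples.

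By Lemma \ref{curve2} we may work with $\mathrm{Aut}(\Gamma_{2})$ in place of $G_{2}$. This group acts on all of $PG(2,q^{2})$ with orbits $\mathcal O_{1}$ (external points) and $\mathcal O_{2}$ (isotropic points). The first step is to recover the tangent line at an isotropic point purely from $\Gamma_{2}$: for $P\in\mathcal O_{2}$ and any point $X$, $X$ is adjacent to $P$ in $\Gamma_{2}$ iff the line $PX$ is tangent to $H$, and since $P\in H$ this happens iff $X$ lies on the unique tangent $\ell_{P}=P^{\perp}$ at $P$; thus $\ell_{P}=\{P\}\cup N_{\Gamma_{2}}(P)$, a set of $q^{2}+1$ points intrinsic to the graph. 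Hence for $g\in\mathrm{Aut}(\Gamma_{2})$ we have $g(\ell_{P})=\ell_{g(P)}$, and consequently, for distinct isotropic $P,Q$, the point $R_{PQ}:=\ell_{P}\cap\ell_{Q}$ satisfies $g(R_{PQ})=R_{g(P)\,g(Q)}$.

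Next I would note that $R_{PQ}=P^{\perp}\cap Q^{\perp}=\langle P,Q\rangle^{\perp}$ is exactly the pole of the secant line $\langle P,Q\rangle$ (and is an external point, see \cite{Hirschfeld1}). Since the unitary polarity is a bijection between points and lines, for distinct isotropic $P,Q,S$ we get $R_{PQ}=R_{PS}$ iff $\langle P,Q\rangle=\langle P,S\rangle$ iff $P,Q,S$ are collinear. Combined with the equivariance $g(R_{PQ})=R_{g(P)g(Q)}$, this shows that $g$ preserves collinearity of isotropic points in both directions; equivalently $g$ permutes the secant sections $\langle P,Q\rangle\cap H$, i.e. $g$ is an automorphism of the design $\mathcal U$.

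It remains to produce the contradiction. The unital $\mathcal U$ has collinear triples of points, since every secant meets $H$ in $q+1\ge 3$ points, and it has non-collinear triples, since $H$ spans $PG(2,q^{2})$ (indeed $q^{3}+1>q+1$, so the isotropic points cannot all lie on a single line). If $G_{2}\cong\mathrm{Aut}(\Gamma_{2})$ were $3$-transitive on $\mathcal O_{2}$ it would carry a collinear triple to a non-collinear one, contradicting the previous paragraph. Therefore $G_{2}$ does not act $3$-transitively on the tangents of $NU(3,q^{2})$. The only non-routine step is the reconstruction of the line and collinearity data from $\Gamma_{2}$ carried out in the second and third paragraphs; once $\mathrm{Aut}(\Gamma_{2})$ is seen to respect the unital structure, the rest is a short combinatorial observation.
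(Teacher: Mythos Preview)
Your argument is correct (implicitly for $q\neq 2$, as you invoke Lemma~\ref{curve2}); the reconstruction of $\ell_P$ as $\{P\}\cup N_{\Gamma_2}(P)$ and of collinearity via $R_{PQ}=\ell_P\cap\ell_Q$ is sound, and the conclusion that $G_2$ acts on isotropic points as an automorphism group of the Hermitian unital indeed rules out $3$-transitivity.

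However, the route is genuinely different from, and more elaborate than, the paper's. The paper argues directly inside $NU(3,q^2)$: since each tangent corresponds to a maximal clique of size $q^2$ (Lemma~\ref{curve}), three concurrent tangents give three such cliques sharing a common vertex, while three pairwise non-concurrent tangents give three cliques meeting pairwise in three distinct vertices; no graph automorphism can send one intersection pattern to the other, so $G_2$ cannot be $3$-transitive on tangents. Under the unitary polarity, ``three tangents concurrent'' is exactly your ``three tangency points collinear'', so the invariant being exploited is the same; the paper just reads it off from the clique intersection pattern without passing through $\Gamma_2$ or the unital design. Your approach has the advantage of proving the stronger statement that $G_2$ respects the full block structure of the Hermitian unital, which could be of independent interest; the paper's approach has the advantage of being a two-line observation once Lemma~\ref{curve} is in hand.
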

  \begin{proof}
   In $PG(2,q^2)$ let us consider the two different configurations of $3$-tangent lines: three concurrent tangents or three tangents which intersect pairwise in three points $P_1,~P_2,~P_3$ on a self-polar triangle.

   In the graph $NU(3,q^2)$ the former set corresponds to three maximal cliques intersecting in their common vertex, whereas in the latter it corresponds to three maximal cliques intersecting pairwise in three different vertices. It is clear that any automorphism of the graph cannot map the former set to the latter.
  \end{proof}

\section{Proof of Theorem \ref{main}}
\subsection{The case $q > 2$}
From now on, assume that $q>2$.
Let $M$ be the minimal normal subgroup of $G_2$ (it is unique by the $2$-transitivity of $G_{2}$). Observe that $M$ cannot be elementary abelian since its order is not a power of a prime (as it contains $P\Gamma U(3,q)$ as its subgroup), so $M$ must be a simple group.

We show that among the finite simple groups which are minimal normal subgroups of $2$-transitive groups, only a few can actually occur in our case.  For this purpose we refer to Cameron's list \cite{Cameron} reported in Table \ref{sgroups}.

\begin{table}[h]
\begin{center}
\begin{tabular}{|c|c|c|}
\hline
$M$         & Degree                  & Remarks         \\ \hline
$A_d$       & $d$                  & $d \geq 5$      \\ \hline
$PSL(d,u)$  & $\frac{u^d-1}{u-1}$  & $d \geq 2$      \\ \hline
$PSU(3,u)$  & $u^3+1$              & $u >2$          \\ \hline
$Sz(u)$     & $u^2+1$              & $u=2^{2a+1} >2$ \\ \hline
$Ree(u)$    & $u^3+1$              & $u=3^{2a+1} >3$ \\ \hline
$PSp(2d,2)$ & $2^{2d-1} + 2^{d-1}$ & $d >2$          \\ \hline
$PSp(2d,2)$ & $2^{2d-1} - 2^{d-1}$ & $d >2$          \\ \hline
$PSL(2,11)$ & $11$                 &                 \\ \hline
$PSL(2,8)$  & $28$                 &                 \\ \hline
$A_7$       & $15$                 &                 \\ \hline
$M_{11}$    & $11$                 &                 \\ \hline
$M_{11}$    & $12$                 &                 \\ \hline
$M_{12}$    & $12$                 &                 \\ \hline
$M_{22}$    & $22$                 &                 \\ \hline
$M_{23}$    & $23$                 &                 \\ \hline
$M_{24}$    & $24$                 &                 \\ \hline
$HS$        & $176$                &                 \\ \hline
$Co_3$      & $276$                &                 \\ \hline
\end{tabular}
\caption{The simple groups $M$ which can occur as minimal normal subgroups of $2$-transitive groups of degree $d$}
\label{sgroups}
\end{center}
\end{table}

% Please add the following required packages to your document preamble:
% \usepackage[table,xcdraw]{xcolor}
% If you use beamer only pass "xcolor=table" option, i.e. \documentclass[xcolor=table]{beamer}

%\begin{itemize}
%\item the Alternating Group $A_m$;
%\item $PSL(d,u),~d\geq 2$;
%\item $PSU(3,u)$;
%\item the Suzuki Group $B_2(u)$;
%\item the Ree Group $G_2(u)$;
%\item $PSp(2d,2)$.	
%\end{itemize}
The Alternating Group $A_d$ is disregarded from Lemma \ref{no3tr}, as it acts $3$-transitively on the sets of tangents.

It is easily seen that $G_2$ is not any of the sporadic groups in Table \ref{sgroups} since their degree is different from $q^3+1$.

Moreover, for the same reason, $PSL(d,u),~ d > 2$ has also to be dismissed by $\frac{u^d-1}{u-1} \neq q^3+1$. In fact, if $\frac{u^d-1}{u-1} = q^3+1$ were true we would have
$$ u^{d-1}+ u^{d-2}+ \dots + u = q^3.$$
Observe that $u$ and $q$ must be powers of the same prime $p$, and this would lead to a contradiction because the right side of the equation is still a power of $p$ whereas
$$u^{d-1}+ u^{d-2}+ \dots + u = u(u^{d-2} +u^{d-3}+ \dots + 1) \neq p^k, $$ for any $k \in \mathbb{Z}$.
Finally, when $d=2$, a $PSL(2,u)$ is not a subgroup of a $PSU(3,q)$, and this dismiss $PSL(2,8)$.

Before analysing the remaining candidates, take a normal subgroup $M$ of  $G_2$.
Then $G_2$ acts on $M$ in its natural way:
$$\varPhi_{g}: \begin{cases}
M \rightarrow M\\
m \mapsto m^{g}:= g^{-1}mg,
\end{cases}$$
for all $g \in G_2.$\\

\begin{lem}
\label{phi}
$\varPhi_{g}=id \Leftrightarrow g =1$	
\end{lem}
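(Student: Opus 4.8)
The statement to prove is that $\varPhi_g = \mathrm{id}$ if and only if $g = 1$, i.e.\ that the conjugation action of $G_2$ on its normal subgroup $M$ is faithful. One direction is trivial: if $g = 1$ then conjugation by $g$ is clearly the identity on $M$. The plan is therefore to establish the forward implication, namely that an element $g \in G_2$ centralizing $M$ must be trivial.

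So suppose $g \in G_2$ commutes with every element of $M$. Since $M$ contains $P\Gamma U(3,q)$ (which is itself $2$-transitive, hence transitive, on the set of tangents of $NU(3,q^2)$, equivalently on the isotropic points of $H(2,q^2)$), the group $M$ is in particular transitive on this set of $q^3+1$ tangents. I would then use a standard centralizer argument: pick a tangent $t_0$ and let $m$ be the image of $t_0$ under $g$ in the induced permutation action on tangents, so $g$ sends $t_0$ to $t_0^g$. For any $x \in M$, using $gx = xg$ we get $(t_0^x)^g = (t_0^g)^x$; combined with transitivity of $M$, this forces the permutation induced by $g$ on the tangents to be completely determined by a single value and to commute with a transitive group, which pins it down. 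More precisely, the centralizer in $\mathrm{Sym}(\Omega)$ of a transitive subgroup acts semiregularly on $\Omega$; applying this with $\Omega$ the set of tangents shows that if $g$ fixes even one tangent then $g$ fixes all of them, and in general the action of $g$ on tangents is semiregular.

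From here I would push the fixed-point information down to the vertex set of $NU(3,q^2)$. Each tangent line carries a maximal clique of $q^2$ vertices, and (for $q \neq 2$, which is the standing assumption of this subsection) by Lemma~\ref{curve} and Lemma~\ref{curve2} these cliques are precisely the tangent-cliques and $g$ permutes them according to its action on tangents. If $g$ fixes a tangent $t_0$ setwise, then within the clique on $t_0$ one can locate at least one fixed vertex — because the centralizer of $M$ also acts on that clique commuting with the action of the stabilizer $M_{t_0}$, which is transitive on the $q^2$ non-isotropic points of $t_0$ (this is where I would invoke the structure of $P\Gamma U(3,q) \le M$ acting on a tangent line). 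Once a single vertex $v$ of $NU(3,q^2)$ is fixed by $g$, I would use that $M$ is $2$-transitive (hence primitive) on tangents together with the $2$-transitivity of $G_2$ to propagate: the set of fixed points of $g$ is a block-like or $M$-invariant-under-normality structure, and a centralizing element with a nonempty fixed set on a faithful primitive action must be the identity. Finally, since $G_2 = \mathrm{Aut}(NU(3,q^2))$ acts faithfully by definition on the vertex set, $g$ fixing every vertex gives $g = 1$, as desired.

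The main obstacle I anticipate is the bookkeeping in the middle step: converting ``$g$ centralizes $M$, and $M$ is transitive on tangents'' into ``$g$ has a fixed tangent'' is not automatic, since semiregular actions need not have fixed points. The clean way around this is to observe that $g$ itself lies in $G_2$ and $G_2$ normalizes $M$, so the subgroup $\langle M, g\rangle$ still acts on the tangents with $M$ transitive and $g$ central; then either $g \in M$ already (in which case the faithfulness of $M$'s own action, which follows since $G_2$ acts faithfully on vertices and $M$ acts transitively hence nontrivially, handles it), or $\langle M, g \rangle / M$ is a nontrivial central quotient — but a $2$-transitive group has trivial centre modulo its (unique, simple, non-abelian) minimal normal subgroup $M$, forcing $g \in M$, and then $g$ central in $M$ means $g \in Z(M) = 1$ since $M$ is simple non-abelian. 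I expect this last line — identifying $Z(M) = 1$ from simplicity of $M$ — to be the actual crux, with everything else reducing to it; it is short but it is the logical heart of the lemma.
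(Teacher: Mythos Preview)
Your proposal eventually reaches a valid argument, but only in the final paragraph; the long middle section is exploration that you yourself correctly flag as inconclusive (semiregularity of the centraliser of a transitive group does not by itself produce a fixed tangent, so the chain ``fixed tangent $\Rightarrow$ fixed vertex $\Rightarrow$ all vertices fixed'' never gets started). The decisive step you land on at the end --- $C_{G_2}(M)$ is normal in $G_2$, so if nontrivial it must contain the unique minimal normal subgroup $M$, whence $M$ would be abelian, contradicting that $M$ is simple non-abelian, and therefore $g\in C_{G_2}(M)=1$ --- is correct and is in fact the slickest way to dispatch the lemma. Your phrasing around it (``nontrivial central quotient'', ``trivial centre modulo $M$'') is muddled and the case split ``$g\in M$ or not'' is unnecessary, but the underlying idea is sound.

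The paper takes a different, more concrete route. Rather than invoking uniqueness of the minimal normal subgroup abstractly, it argues pointwise on tangents: for each tangent $t$ one produces an element $\alpha$ (to make the commutation work one takes $\alpha\in M$, using that $M$ is $2$-transitive) whose \emph{only} fixed tangent is $t$; since any $c\in C_{G_2}(M)$ commutes with $\alpha$, the image $c(t)$ is again fixed by $\alpha$, hence $c(t)=t$. Thus $c$ fixes every tangent, and therefore every vertex (vertices being recovered from intersections of tangent-cliques), so $c=1$. This is the classical ``element with a unique fixed point'' trick for showing that the centraliser of a primitive group acts trivially.

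Your abstract argument is shorter and uses only structural facts already recorded in the paper (uniqueness, simplicity and non-abelianness of $M$); the paper's argument stays closer to the permutation action but needs the extra observation that an $\alpha$ fixing exactly one tangent exists. Either approach is adequate here; if you submit yours, strip out the inconclusive middle section and state the normal-subgroup argument directly.
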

\begin{proof}
It is enough to show that $	\varPhi_{g}=id \Rightarrow g =1$. From $\varPhi_{g}=id$ it follows $g \in C_{G_2}(M)$ where $C_{G_2}(M)$ is the centraliser of $G_2$ in $M$. Suppose on the contrary that $C_{G_2}(M)$ is not trivial. Then it must be primitive since $M$ is $2$-transitive. Let $t$ be a tangent line, and consider $c \in C_{G_2}(M)$. Then, there exists $\alpha \in G_2$ which fixes only $t$. In our case this implies that $t$ is also fixed by $c$, that is, $$ c(\alpha(t))=c(t),\quad \alpha(c(t))= c(\alpha(t))=c(t).$$
Therefore, every tangent line is fixed by $c$, and this implies that $c$ fixes every point as tangent lines are the maximal cliques, so
$C_{G_2}(M)$ is trivial, a contradiction.
\end{proof}

Lemma \ref{phi} shows that $G_2$ is isomorphic to a subgroup of its automorphism group. 

$Sz(u),~Ree(u),~PSp(2d,2)$ cannot be the minimal normal subgroup of $G_2$ as their automorphism group does not contain $P\Gamma U(3,q)$, we refer the reader to \cite{Wilson} for all the details.	

Our discussion on the groups in Table \ref{sgroups} together with lemma \ref{phi} yield that $M=PSU(3,q)$. Since $G_2$ is isomorphic to a subgroup of $Aut(M) \cong P\Gamma U(3,q)$ it follows from Lemma \ref{subgr} that $G_2 \cong P\Gamma U(3,q)$.

 \subsection{The case $q=2$}
  In the smallest case $q=2$, consider the Hermitian curve $H(2,4)$ with $q^{3}+1=9$ isotropic points. The graph $NU(3,4)$ has $q^{4}+q^{2}+1-q^{3}+1=12$ vertices. Through an external point $P\in PG(2,4)\setminus H(2,4)$ there are $q+1=3$ tangent lines, and each line is incident with as many as $q^{2}-1=3$ non-isotropic points other than $P$. Hence the graph is 9-regular. Now let $P$ and $Q$ be two non-adjacent vertices, then $PQ$ is a $(q+1)$-secant line. Through $P$ there are $q+1$ tangent lines, and the same number through $Q$, and they meet each other in $(q+1)^{2}$ other common neighbours, i.e. $\mu=9$. Now let $P$ and $Q$ be two adjacent vertices. Then $PQ$ is tangent to $H$ at $T$. On $PQ$ there are $q^{2}-2$ non-isotropic points other than $P$ and $Q$, so that they have at least $q^{2}-2$ common neighbours. Moreover, through $P$ there are $q$ tangents other than $PQ$, and the same number through $Q$, and they meet each other in $q^{2}$ other common neighbours, i.e. $\lambda=6$. The complementary graph $\overline{NU(3,4)}$ has parameters $(v',k',\lambda',\mu')=(12,2,1,0)$. Therefore it is a trivial strongly regular graph with four connected components isomorphic to the complete graph $K_{3}$. Since $Aut(NU(3,4))=Aut(\overline{NU(3,4)})$, it is enough to find the automorphism group of the complementary graph. Observe that $Aut(\overline{NU(3,4)})$ is the wreath product of four copies of the automorphism group of $K_{3}$, which is the dihedral group $D_{3}\cong S_{3}$, by the Symmetric group of degree $4$ acting as a permutation group on the four connected components:
 $$G_{2}=Aut(NU(3,4))\cong S_{3} \wr S_{4} \cong S_3^4 \rtimes S_4.$$
  In particular, $|G_{2}|=31104.$
  It should be noted that in the graph $\Gamma_{2}$ we make distinction between the two idempotent kinds of maximal cliques, whether we add or do not the fifth point on the Hermitian curve $H(2,4)$, i.e. $G_{2}\ncong Aut(\Gamma_{2})\cong P\Gamma U(3,2)$.

  \section{Conclusion}
  To prove our main theorem, we used the strong hypothesis of the $2$-transitivity of the automorphism group. When the dimension $n$ increases, this fundamental condition is no longer satisfied. With a computer aided search we noted that $P\Gamma U(n+1,q)$ may still be the automorphism group of the graph $NU(n+1,q^2)$, and this will be investigated in the future.

  Moreover, it is known that cospectral strongly regular graphs arising from non-classical unitals are not always isomorphic to $NU(3,q^2)$. It would be interesting to determine their automorphism groups.

 \newpage

\end{document}